\theoremstyle{plain}
\newtheorem{teo}{Theorem}
\newtheorem{lem}[teo]{Lemma}
\newtheorem{prop}[teo]{Proposition}
\theoremstyle{definition}
\newtheorem{defi}[teo]{Definition}
\newtheorem{rem}[teo]{Remark}
\newcommand{\Cbb}{{\mathbb C}}
\newcommand{\Qbb}{{\mathbb Q}}
\newcommand{\Zbb}{{\mathbb Z}}
\newcommand{\Rbb}{{\mathbb R}}
\newcommand{\Pbb}{{\mathbb P}}
\newcommand{\lra}{\longrightarrow}
\begin{document}

\title{The pseudo-index of horospherical Fano varieties}

\author{Boris Pasquier}
\maketitle

\begin{abstract}
We prove a conjecture of L.~Bonavero, C.~Casagrande, O.~Debarre and
S.~Druel, on the pseudo-index of smooth Fano varieties, in the
special case of horospherical varieties.
\end{abstract}

\textbf{Mathematics Subject Classification.} 14J45 14L30 52B20\\

\textbf{Keywords.} Horospherical varieties, Picard number,
pseudo-index, Fano varieties.\\

~\\

Let $X$ be a normal, complex, projective algebraic variety of
dimension $d$. Assume that $X$ is Fano, namely the anticanonical
divisor $-K_X$ is Cartier and ample. The {\it pseudo-index}
$\iota_X$ is the positive integer defined by
$$\iota_X:=\operatorname{min}\{-K_X.C\mid C\mbox{ rational curve in
}X\}.$$

The aim of this paper is to prove the following result.

\begin{teo}\label{Theo}
Let $X$ be a  $\Qbb$-factorial horospherical Fano variety of
dimension $d$, Picard number $\rho_X$ and pseudo-index $\iota_X$.
Then $$(\iota_X-1)\rho_X\leq d.$$ Moreover, equality holds if and
only if $X$ is isomorphic to $(\Pbb^{\iota_X-1})^{\rho_X}.$
\end{teo}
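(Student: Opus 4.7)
The plan is to adapt the strategy that Casagrande developed for toric Fano varieties to the horospherical setting. The first step is to translate everything into combinatorics: a $\Qbb$-factorial horospherical Fano variety $X$ is encoded by a simplicial colored fan $\mathcal{F}$ in a lattice $N$ together with a finite set of colors $\mathcal{D}_X$. The rays of $\mathcal{F}$ correspond to $G$-stable prime divisors and the colors to $B$-stable non-$G$-stable divisors; the Picard number $\rho_X$ equals the number of rays plus colors minus the rank of the relevant character lattice, and $-K_X$ has an explicit expression as a positive sum over these divisors. The Fano condition is reflected in a moment polytope whose vertices correspond to the maximal colored cones of $\mathcal{F}$.

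Next I would look for rational curves of low anticanonical degree. For each wall of the colored fan, or each primitive relation among the generators (including those coming from colors), one obtains an invariant rational curve $C$ whose class lies in $N_1(X)_\Qbb$ and whose intersection $-K_X\cdot C$ is computable from the polytope data. The pseudo-index hypothesis $\iota_X\leq -K_X\cdot C$ then translates into a combinatorial inequality: the generators involved in the relation cannot be too tightly packed in a precise polytope sense.

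The central step is to exhibit $\rho_X$ numerically independent such curves $C_1,\dots,C_{\rho_X}$, together with a partition of a set of $d$ lattice directions carried by $N_\Qbb$, so that each $C_i$ accounts for $d_i$ dimensions with $-K_X\cdot C_i\leq d_i+1$ and $\sum_i d_i\leq d$. Summing the pseudo-index bounds then yields $\rho_X\iota_X\leq d+\rho_X$, which is the required inequality. In the equality case every intermediate estimate must be tight: each $C_i$ satisfies $-K_X\cdot C_i=\iota_X$ and contributes exactly $d_i=\iota_X-1$ dimensions, which should force a complete decomposition of $(N,\mathcal{F})$ as a product of $\rho_X$ copies of the fan of $\Pbb^{\iota_X-1}$, and hence $X\cong(\Pbb^{\iota_X-1})^{\rho_X}$.

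The hardest part will be handling the colors, which have no toric analogue. Colors contribute to $\rho_X$ and to $-K_X$, but their behaviour depends on the root system of $G$ rather than on pure lattice data, so the combinatorial inequalities must incorporate the specific roots attached to each color. In particular, the equality analysis requires ruling out non-toric rank-one horospherical building blocks such as flag varieties $G/P$ with $\iota>1$: one must show that saturating the bound forces every color to be absent, so that $\mathcal{F}$ degenerates into a product of simplicial toric fans, at which point the classification reduces to the toric case already settled by Casagrande.
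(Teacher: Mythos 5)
Your outline captures the right general flavor (encode $X$ by a polytope/colored fan, bound $\iota_X$ by the degrees of invariant curves attached to walls and to colors, then sum), but your central step contains a gap that would make the argument fail. You propose to partition ``a set of $d$ lattice directions carried by $N_{\Qbb}$'' among $\rho_X$ curves. However $N_{\Qbb}$ has dimension $n$, the \emph{rank} of the horospherical variety, which is in general much smaller than $d$ (it is $0$ for flag varieties); $d=n+\dim G/P$. So there are no ``$d$ lattice directions'' to distribute, and any purely lattice-theoretic count can only ever produce a bound involving $n$, not $d$. The missing ingredient is the root-theoretic inequality $\sum_{\alpha\in S\backslash I}(a_\alpha-1)+n\leq d$ (Lemma 4.13 of \cite{Pa06}), where $a_\alpha=\langle 2\rho^P,\check\alpha\rangle$; the paper first proves $\epsilon_Q\, r\leq\sum_{u\in V(Q)}a_u$ by a weighted averaging over a single linear relation $\sum m_iv_i=0$ among the vertices of $Q^*$ (not by exhibiting $\rho_X$ independent curves each ``accounting for $d_i$ dimensions''), and only then converts the combinatorial bound into a bound by $d$ via that lemma. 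Your proposed mechanism for the inequality is asserted rather than constructed, and as stated it lives in the wrong space.

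Your treatment of the equality case is also backwards on a key point. You claim that saturating the bound ``forces every color to be absent, so that $\mathcal{F}$ degenerates into a product of simplicial toric fans'' and the classification reduces to Casagrande's toric case. In fact the opposite happens: equality forces $\iota_X=a_\alpha$ for every non-color $\alpha$, and by Remark \ref{remnoncolor} this means every simple root $\alpha\in S\backslash I$ with $\check\alpha_M\neq 0$ \emph{must be} a color; the roots with $\check\alpha_M=0$ split off flag-variety factors $G/P(\omega_\alpha)\cong\Pbb^{a_\alpha-1}$, which are projective spaces but not toric ones. After the product decomposition, the remaining factors are horospherical varieties of Picard number $1$, and identifying them with projective spaces requires the classification of smooth projective two-orbit varieties of Picard number $1$ (\cite[Theorem 1.4]{Pa08}), not the toric result. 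So the reduction to the toric case you envisage is not available, and this is precisely where the non-toric content of the theorem resides.
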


This inequality has been conjectured by L.~Bonavero, C.~Casagrande,
O.~Debarre and S.~Druel for all smooth Fano varieties \cite{BCDD}.
Moreover, they have proved Theorem \ref{Theo} in the case of Fano
varieties of dimension~3 and~4, all flag varieties, and also for
some particular toric varieties. More generally, this result has
been also proved by C.~Casagrande for all $\Qbb$-factorial toric
Fano varieties \cite{Ca06}. Theorem \ref{Theo} generalizes these
results to the family of horospherical varieties that contains in
particular the families of toric varieties and flag varieties.

We will use arguments similar to the ones used by C.~Casagrande, and
we will use also a few results on horospherical varieties from
\cite{Pa06}. So, let us summarize the theory of Fano horospherical
varieties (See \cite{Pa06} for more details).
~\\

{\it Horospherical} varieties are normal algebraic varieties where a
connected reductive algebraic group $G$ acts with an open orbit
isomorphic to a torus bundle over a flag variety. The dimension of
the torus is called the {\it rank} of the variety and denoted by
$n$. Toric varieties (where $G=(\Cbb^*)^n$) and flag varieties
(where $n=0$) are the first examples of horospherical varieties.

A homogeneous space $G/H$ is said to be horospherical if it is a
torus bundle over a flag variety, so that all {\it $G/H$-embeddings}
({\it i.e.} normal $G$-varieties with an open $G$-orbit isomorphic
to $G/H$) are horospherical varieties.

Let $G/H$ be a horospherical homogeneous space. Denote by $P$ the
normalizer of $H$ in $G$, it is a parabolic subgroup of $G$. There
exists a Borel subgroup $B$ of $G$ contained in $P$ such that $H$
contains the unipotent radical of $B$. Then $P$ defines a subset $I$
of the set $S$ of simple roots of $G$, so that $S\backslash I$ is
the set of simple roots $\alpha$ such that the fundamental
associated weight $\omega_\alpha$ is a character of $P=P_I$. For all
$J\subset S$, we define the associated parabolic subgroup $P_J$ of
$G$ containing $B$ as above.

Let us define a lattice $M$ as the set of characters of $P$ whose
restrictions to $H$ are trivial. Let $N$ be the dual lattice of $M$.
Let $M_\Rbb:=M\otimes_\Zbb\Rbb$ and $N_\Rbb:=N\otimes_\Zbb\Rbb$.

For all $\alpha\in S\backslash I$, we denote by $\check\alpha_M$ the
restriction of the coroot $\check\alpha$ to $M$. We also define
$a_\alpha:= \langle 2\rho^P,\check\alpha\rangle$ where $2\rho^P$ is
the sum of all the positive roots not generated by the simple roots
of $I$. (Remark that $a_\alpha\geq 2$.)

We may now formulate the classification of horospherical Fano
varieties.

\begin{defi} \label{reflexif}
Let $G/H$ be a homogeneous horospherical space.
A convex polytope $Q$ in $N_\Rbb$ is said to be  {\it $G/H$-reflexive} if the following three conditions are satisfied:\\

(1) The vertices of $Q$ are in $N\cup\{\frac{\check\alpha_M}{a_\alpha}\mid\alpha\in S\backslash I\}$, and the interior $\stackrel{o}{Q}$ of $Q$ contains 0.\\

(2) $Q^*:=\{v\in M_\Rbb\mid \forall u\in Q,\,\langle v,u\rangle\geq-1\}$ is a lattice polytope ({\it i.e.} has its vertices in $M$).\\

(3) For all $\alpha\in S\backslash I$, $\frac{\check\alpha_M}{a_\alpha}\in Q$.\\
\end{defi}

Then we have the following classification  \cite[Chapter
3]{Pa06}.

\begin{prop}
Let $G/H$ be a horospherical homogeneous space. Then the set of Fano
$G/H$-embeddings is in bijection with the set of $G/H$-reflexive
polytopes.
\end{prop}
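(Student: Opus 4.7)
The plan is to translate between the combinatorial data of a Fano $G/H$-embedding and the polytope conditions (1)--(3), using Luna--Vust theory and the explicit formula for $-K_X$ on a horospherical variety. Recall that every $G/H$-embedding $X$ is determined by a colored fan $(\mathbb{F},\mathcal{F}_{\mathbb{F}})$, with $\mathbb{F}$ a fan in $N_\Rbb$ and $\mathcal{F}_{\mathbb{F}}\subseteq S\backslash I$ the set of colors absorbed by $X$; the $B$-stable prime divisors of $X$ are the $G$-invariant $D_\rho$ (one per ray $\rho\in\mathbb{F}(1)$) and the color divisors $D_\alpha$ (one per $\alpha\in S\backslash I$, always present in $X$), and the standard formula for the anticanonical class gives
\[
-K_X \;=\; \sum_{\rho\in\mathbb{F}(1)} D_\rho \;+\; \sum_{\alpha\in S\backslash I} a_\alpha\, D_\alpha.
\]

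For the forward direction, starting from a Fano $G/H$-embedding $X$, I would define
\[
Q \;:=\; \operatorname{conv}\!\Bigl(\{x_\rho : \rho\in\mathbb{F}(1)\}\;\cup\;\{\check\alpha_M/a_\alpha : \alpha\in\mathcal{F}_{\mathbb{F}}\}\Bigr),
\]
where $x_\rho\in N$ is the primitive generator of $\rho$. Then $Q^*$ coincides (up to sign convention) with the moment polytope of $-K_X$: its vertices are precisely the elements $m_\sigma\in M_\Rbb$ attached by Luna--Vust theory to each maximal cone $\sigma\in\mathbb{F}$, satisfying $\langle m_\sigma,x_\rho\rangle=-1$ on the rays of $\sigma$ and $\langle m_\sigma,\check\alpha_M/a_\alpha\rangle=-1$ for each $\alpha\in\mathcal{F}_\sigma$. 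The Cartier condition for $-K_X$ then gives $m_\sigma\in M$, which is exactly~(2). Ampleness of $-K_X$ yields strict convexity of the piecewise-linear support function and, via Brion's criterion applied to every color divisor $D_\alpha$, the inequality $\langle m_\sigma,\check\alpha_M/a_\alpha\rangle\geq -1$ for every $\alpha\in S\backslash I$ and every $\sigma$, whence $\check\alpha_M/a_\alpha\in Q$, proving~(3). Condition~(1) holds by construction, with $0\in\stackrel{o}{Q}$ since $Q^*$ is bounded and full-dimensional by ampleness.

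For the backward direction, given a $G/H$-reflexive polytope $Q$, I would build a colored fan by declaring the maximal cones of $\mathbb{F}$ to be the cones from the origin over the facets of $Q$, and setting $\mathcal{F}_\mathbb{F}:=\{\alpha\in S\backslash I:\check\alpha_M/a_\alpha\text{ is a vertex of }Q\}$. Conditions~(1) and~(3) ensure this is a valid colored fan (no color is incompatible with the cone containing it). Condition~(2) then says the piecewise-linear function on $|\mathbb{F}|$ whose graph is the boundary of the cone over $Q$ is $M$-integral on every maximal cone, so defines a Cartier divisor; strict convexity follows from $0\in\stackrel{o}{Q}$. One verifies this Cartier divisor is precisely $-K_X$ from the formula above, hence the resulting embedding is Fano. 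That the two constructions are mutually inverse is then formal.

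The main technical obstacle is proving that condition~(3) is equivalent to ampleness once~(1) and~(2) are assumed: one has to establish that $-K_X$ ample controls not just the support function on $|\mathbb{F}|$ but also the values on the \emph{non-used} color points $\check\alpha_M/a_\alpha$ with $\alpha\notin\mathcal{F}_{\mathbb{F}}$. This requires Brion's ampleness criterion for spherical varieties: a $B$-stable Cartier divisor is ample iff its support function is strictly convex \emph{and} satisfies the right inequality at every color, with the normalizations above. A secondary difficulty is the bookkeeping, since the division by $a_\alpha$ in the color vertex $\check\alpha_M/a_\alpha$ compensates exactly for the coefficient $a_\alpha$ of $D_\alpha$ in $-K_X$, and this rescaling must be tracked carefully so that one dual polytope $Q^*$ simultaneously encodes the Cartier and ample conditions for all $B$-stable divisors.
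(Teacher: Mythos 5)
The paper does not actually prove this proposition: it is quoted from \cite[Chapter 3]{Pa06}, and your outline follows the same route as that reference --- Luna--Vust colored fans, the anticanonical formula $-K_X=\sum_{\rho}D_\rho+\sum_{\alpha\in S\backslash I}a_\alpha D_\alpha$, and Brion's Cartier/ampleness criteria for $B$-stable divisors on spherical varieties --- so as a plan it is essentially correct and correctly isolates the one genuinely delicate point (that ampleness controls the value of the support function at the \emph{unused} color points, which is exactly condition~(3)). Two small caveats: that decisive step is invoked rather than carried out, and in your backward construction the colors of the resulting embedding should be those $\alpha$ with $\check\alpha_M/a_\alpha$ lying on the boundary of $Q$ (as in the paper's definition of $\mathcal{D}_X$), not only those occurring as vertices of $Q$ --- the two notions coincide only in the $\Qbb$-factorial case.
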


From now on, let us fix a horospherical homogeneous space $G/H$, a
Fano $G/H$-embed\-ding  $X$ and the associated $G/H$-reflexive
polytope $Q$. We define the {\it colors} of $X$ to be the roots
$\alpha\in S\backslash I$ such that
$\frac{\check\alpha_M}{a_\alpha}\in Q\backslash \stackrel{o}{Q}$.
Denote by $\mathcal{D}_X$ the set of colors of $X$.

\begin{rem}\label{remcoloredfan}
There is a classification of $G/H$-embeddings in terms of colored
fans, due to D.~Luna and T.~Vust \cite{LV83}. The colored fan of the
Fano $G/H$-embedding $X$ associated to a $G/H$-reflexive polytope
$Q$ is the set of colored cones $(\mathcal{C}_F,\mathcal{D}_F)$ and
their colored faces, where $\mathcal{C}_F$ is the cone of $N_\Rbb$
generated by the facet $F$ of $Q$ and
$\mathcal{D}_F:=\{\alpha\in\mathcal{D}_X\mid\check\alpha_M\in\mathcal{C}_F\}$.
See \cite[Ch.1 and Ch.3]{Pa06} for details.
\end{rem}

Moreover, $X$ is $\Qbb$-factorial if and only if $Q$ is simplicial
and the points $\frac{\check\alpha_M}{a_\alpha}\in Q\backslash
\stackrel{o}{Q},\,\alpha\in\mathcal{D}_X$ are disjoint vertices of
$Q$. From now, suppose that $X$ is $\Qbb$-factorial. Then the Picard
number of $X$ is given by $\rho_X=r+\sharp (S\backslash
I)-\sharp(\mathcal{D}_X)$ where $n+r$ is the number of vertices of
$Q$.\footnote{Note that, as in the toric case, the Picard number of
$\Qbb$-factorial horospherical Fano varieties is bounded by twice
the dimension \cite[Theorem 0.2]{Pa06}.} And for all vertices $u$ of
$Q$, one can now define an integer $a_u$ to be $a_\alpha$ if there
exists $\alpha\in S\backslash I$ such that
$u=\frac{\check\alpha_M}{a_\alpha}$, and $1$ otherwise.

Let us denote by $V(Q)$ and $V(Q^*)$ the set of vertices of $Q$ and
$Q^*$ respectively. The facets of $Q$ are in bijection with the
vertices of $Q^*$ (and likewise for facets of $Q^*$). We denote by
 $F_v$ the facet of $Q$ corresponding to
 a vertex $v$ of $Q^*$. Remark that the
$F_v$ is the set of elements $u$ of $Q$ satisfying $\langle
v,u\rangle=-1$. By Luna-Vust theory, $V(Q^*)$ ({\it i.e.}, the set
of facets of $Q$) is in bijection with the set of closed $G$-orbits
of $X$. Moreover, in our case, we can describe explicitly the closed
$G$-orbits of $X$ as follows. Let $v\in V(Q^*)$ and
$J_v:=\{\alpha\in S\backslash
I\mid\frac{\check\alpha_M}{a_\alpha}\in F_v\}.$ Then the $G$-orbit
associated to $v$ is isomorphic to $G/P_{I\cup J_v}$.

Let us now define $$\epsilon_Q:=\operatorname{min}\{a_u(1+\langle
v,u \rangle)\mid u\in V(Q),\, v\in V(Q^*),\, u\not\in F_v\}.$$

The pseudo-index of $X$ is related to $\epsilon_Q$ as follows.

\begin{prop}\label{lem1}
Let $X$ and $Q$ as above. Then,

(i) $\iota_X\leq \epsilon_Q$

(ii) $\forall \alpha\in (S\backslash I)\backslash \mathcal{D}_X$,
$\iota_X\leq a_\alpha$.
\end{prop}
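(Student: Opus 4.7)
The plan is to construct, in each case, explicit rational curves on $X$ with small anticanonical degree, following the template of Casagrande's toric argument \cite{Ca06} adapted to the colored-fan combinatorics.

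For part (i), I fix a pair $(u,v)$ with $u\in V(Q)$, $v\in V(Q^*)$, and $u\notin F_v$ and produce a $B$-stable irreducible rational curve $C_{u,v}\subset X$ meeting the closed $G$-orbit $Y_v\cong G/P_{I\cup J_v}$ corresponding to $F_v$, whose combinatorial direction is determined by the vertex $u$. Since $X$ is $\Qbb$-factorial and $Q$ is simplicial, the construction is local around the simplicial cone $\mathcal{C}_{F_v}$: one picks a facet of $\mathcal{C}_{F_v}$ avoiding the ray through $u$ and takes the associated $B$-stable curve, in the spirit of Casagrande's $T$-invariant construction in the toric case. Then one uses the standard expression for $-K_X$ as a positive $\Zbb$-linear combination of $B$-stable prime divisors $E_{u'}$ and $D_\beta$, with coefficients $a_{u'}$ and $a_\beta$ determined by the polytope data. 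Expanding $-K_X\cdot C_{u,v}$ in terms of the intersection numbers read off from $Q$, and using the defining relation $\langle v,u'\rangle=-1$ for $u'\in F_v$, one arrives at
$$-K_X\cdot C_{u,v}=a_u\bigl(1+\langle v,u\rangle\bigr).$$
Minimizing over such $(u,v)$ yields $\iota_X\leq\epsilon_Q$.

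For part (ii), let $\alpha\in(S\setminus I)\setminus\mathcal{D}_X$, so that $\check\alpha_M/a_\alpha$ lies in $\stackrel{o}{Q}$ and the color $D_\alpha$ is not attached to any ray of the colored fan. I would take a generic point $x$ of the open $G$-orbit and consider the closure in $X$ of the orbit of $x$ under the minimal parabolic $P_{\{\alpha\}}$. This produces a rational curve $C\cong\Pbb^1$ contained in the open $G$-orbit, meeting $D_\alpha$ transversally in a single point and disjoint from every other $B$-stable prime divisor of $X$. Plugging these intersection numbers into the anticanonical formula immediately gives $-K_X\cdot C=a_\alpha$, hence $\iota_X\leq a_\alpha$.

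The main obstacle is the construction and intersection calculation in (i): one must verify that $C_{u,v}$ is irreducible, rational, and incident to $Y_v$, and that its intersection numbers against the $B$-stable prime divisors match the combinatorial prediction. Once this is settled, the identity $-K_X\cdot C_{u,v}=a_u(1+\langle v,u\rangle)$ reduces to a bookkeeping calculation using the Luna-Vust correspondence between facets of $Q$ and closed $G$-orbits. For (ii), the only subtlety is checking that the closure of $P_{\{\alpha\}}\cdot x$ stays in the open orbit and picks up only $D_\alpha$; this relies precisely on the hypothesis $\alpha\notin\mathcal{D}_X$, which guarantees that $\check\alpha_M$ is not a ray of the colored fan of $X$.
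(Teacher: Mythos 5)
Your outline for (i) skips the one step that actually carries the proof. You propose to attach to \emph{every} pair $(u,v)$ with $u\notin F_v$ a $B$-stable curve $C_{u,v}$ of degree $a_u(1+\langle v,u\rangle)$, but no such curve exists in general: the $B$-stable curves of toric type are the curves $C_\mu$ associated to $(n-2)$-dimensional faces $\mu$ of $Q$, and such a curve only relates $F_v$ to a vertex $u$ that is \emph{adjacent} to $F_v$ (i.e.\ lies on a facet meeting $F_v$ along a wall). A "facet of $\mathcal{C}_{F_v}$ avoiding the ray through $u$" is every facet of $\mathcal{C}_{F_v}$, so your recipe does not single out a curve tied to the given $u$. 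The paper's Lemma \ref{lem2} is precisely the missing ingredient: it shows that a pair $(u,v)$ \emph{achieving the minimum} $\epsilon_Q$ is automatically adjacent, which is why it suffices to treat that pair. Moreover your claimed identity is false even for adjacent pairs: the correct formula is $-K_X\cdot C_\mu=a_u(1+\langle v,u\rangle)/\langle\chi_\mu,a_uu\rangle$, where the denominator is a positive integer that can exceed $1$ when $X$ is $\Qbb$-factorial but not locally factorial (this is exactly what Lemma \ref{lemmacondition2} exploits later). The denominator happens to work in your favour for the inequality, but the exact equality you assert is not available.

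For (ii) the proposed curve does not exist. The orbit $P_{\{\alpha\}}\cdot x$ of a general point under a minimal parabolic is not a curve (already the $B$-orbit of a general point is dense, since $G/H$ is spherical), and more fundamentally the open $G$-orbit of a horospherical variety typically contains no complete curves at all: e.g.\ for $G/H=SL_2/U\cong\Cbb^2\setminus\{0\}$ the open orbit is quasi-affine. So a complete rational curve inside the open orbit, meeting $D_\alpha$ once and nothing else, cannot be produced this way. The paper instead takes a closed $G$-orbit $Y\cong G/P_Y$ corresponding to a vertex $v\in V(Q^*)$ with $\check\alpha_M\in\mathcal{C}_{F_v}$ and uses the Schubert curve $C_{\alpha,v}=\overline{Bs_\alpha P_Y/P_Y}$, for which Brion's intersection formulas give $-K_X\cdot C_{\alpha,v}=a_\alpha+\langle v,\check\alpha_M\rangle\le a_\alpha$ since $\langle v,\check\alpha_M\rangle\le 0$. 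Note that this degree is in general \emph{strictly} less than $a_\alpha$ (Remark \ref{remnoncolor}), which again signals that a curve of degree exactly $a_\alpha$ is not the natural object here. Both halves of your argument therefore need to be replaced by the explicit curves $C_\mu$ and $C_{\alpha,v}$ and the degree formulas of Lemma \ref{lemnoir}.
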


To prove this result we need the following lemma.

\begin{lem}\label{lem2}
Let $u\in V(Q)$ and $v\in V(Q^*)$ such that
$\epsilon_Q=a_u(1+\langle v,u \rangle)$. Then $u$ is adjacent to
$F_v$ ({\it i.e.} $F_v$ and some facet containing $u$ intersect
along a $(n-2)$-dimensional face).
\end{lem}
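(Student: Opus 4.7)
The plan is to translate the lemma into a statement about the polar polytope $Q^*$ and then exploit the standard fact from linear programming that a linear function on a convex polytope admits no strict local minimum at a vertex that is not globally minimizing. Recall the polar duality: vertices of $Q^*$ correspond to facets of $Q$, vertices of $Q$ correspond to facets of $Q^*$, and more generally $k$-faces of $Q$ correspond to $(n-1-k)$-faces of $Q^*$ with reversed inclusion. To each vertex $u$ of $Q$ I associate the facet $F_u^*:=\{w\in Q^*\mid \langle w,u\rangle=-1\}$ of $Q^*$, and I use the equivalence $u\in F_w\iff w\in F_u^*$. Moreover, two facets $F_v,F_{v'}$ of $Q$ intersect along an $(n-2)$-face if and only if $v$ and $v'$ are joined by an edge of $Q^*$. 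Under this dictionary the conclusion of the lemma amounts to exhibiting a vertex $v'$ of $Q^*$ adjacent to $v$ in the $1$-skeleton of $Q^*$ and lying on $F_u^*$.

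The first step is to note that minimality of $\epsilon_Q$ among pairs with first coordinate $u$ forces $v$ to minimize the linear function $\phi:w\mapsto\langle w,u\rangle$ over $V(Q^*)\setminus F_u^*$, since $a_u$ depends only on $u$. The global minimum of $\phi$ on all of $Q^*$ equals $-1$ and is attained precisely on $F_u^*$; since $\phi(v)=\langle v,u\rangle>-1$, the vertex $v$ is not a global minimum of $\phi$.

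The second step is to apply the basic linear-programming fact that a linear function on a convex polytope cannot have a strict local minimum at a vertex unless it is also a global minimum there. Concretely, writing the displacement from $v$ to any vertex of strictly smaller value as a nonnegative combination of the edge directions emanating from $v$, one of these edge directions must strictly decrease $\phi$, yielding a neighbor $v'$ of $v$ in $Q^*$ with $\phi(v')<\phi(v)$. By the minimality of $v$ among vertices outside $F_u^*$, any such $v'$ must lie in $F_u^*$, so $u\in F_{v'}$. Since $v$ and $v'$ are joined by an edge of $Q^*$, the facets $F_v$ and $F_{v'}$ share an $(n-2)$-face of $Q$, and $F_{v'}$ is the desired facet through $u$ adjacent to $F_v$.

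I do not expect serious obstacles; the main point is the clean translation of the geometric adjacency in $Q$ into an edge-adjacency in $Q^*$, after which minimality of $\epsilon_Q$ delivers the conclusion essentially by inspection. Note that neither the simpliciality of $Q$ (which follows from the $\Qbb$-factoriality of $X$) nor any specific feature of the horospherical setting enters the argument: only the convexity of $Q$ and $Q^*$ and the reflexivity that makes $Q^*$ a polytope with $0$ in its interior are used.
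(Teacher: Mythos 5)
Your proof is correct. At bottom it is the polar--dual form of the paper's argument, but the packaging is genuinely different and somewhat more general. The paper works in the primal: it uses the simpliciality of $Q$ to take the vertices $e_1,\dots,e_n$ of $F_v$ and the dual basis $(e_j^*)$, writes the vertices of $Q^*$ adjacent to $v$ explicitly as $v^j=v+\gamma_je_j^*$ with $\gamma_j>0$, and observes --- exactly as you do via the function $\phi(w)=\langle w,u\rangle$ --- that minimality of $\epsilon_Q$ forces $\langle e_j^*,u\rangle\geq 0$ whenever $u\notin F_j$; if this held for all $j$, then $u$ would lie in the cone generated by $e_1,\dots,e_n$, hence (being in $Q$) in the convex hull of $0,e_1,\dots,e_n$, contradicting that $u$ is a vertex of $Q$ distinct from the $e_i$. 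Your version replaces this primal contradiction by the equivalent dual statement that the linear form $\phi$ cannot have a non-global local minimum at the vertex $v$ of $Q^*$, since its global minimum $-1$ is attained on the facet $F_u^*$. What your route buys is economy and generality: no dual basis, no explicit coordinates for the adjacent vertices, and no appeal to simpliciality of $Q$ (which the paper's construction of the $u^j$ and $v^j$ does use, and which is available here because $X$ is assumed $\Qbb$-factorial). What the paper's explicit computation buys is that the formula $v^j=v+\gamma_je_j^*$ and the sign dichotomy for $\langle e_j^*,u\rangle$ are set up in a form that is reused almost verbatim in Step~2 of the equality case of Theorem~\ref{Theo}. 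Your proof as written fully establishes the lemma as stated; the only point worth making explicit is the standard fact that the tangent cone of $Q^*$ at $v$ is generated by the edge directions at $v$, which is what licenses the local-to-global step.
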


\begin{proof}

Let $v\in V(Q^*)$. Let $e_1,\dots,e_n$ be the vertices of the
complex $F_v$. For all $j\in\{1,\dots,n\}$, denote by $F_j$ the
facet of $Q$ containing $e_1,\dots,e_{j-1},e_{j+1},\dots,e_n$
distinct from $F_v$ and denote by $u^j$ the vertex of $F_j$ distinct
from $e_1,\dots,e_n$.

Let $(e_1^*,\dots,e_n^*)$ be the dual basis of $(e_1,\dots,e_n)$ in
$M_\Rbb$. Let $j\in\{1,\dots,n\}$. Then $\langle
e_j^*,u^j\rangle\neq 0$ (otherwise, $u^j$ is in the hyperplane
generated by the $(e_i)_{i\neq j}$) and one can define
$$\gamma_j=\frac{-1-\langle v,u^j\rangle}{\langle
e_j^*,u^j\rangle}.$$ Moreover, the vertex of $Q^*$ associated to
$F_j$ is $v^j=v+\gamma_je_j^*$. Then $\gamma_j>0$ because $\langle
v^j,e_j\rangle >-1$.

Let $v\in V(Q^*)$ and $u\in V(Q)$ such that
$\epsilon_Q=a_u(1+\langle v,u \rangle)$. Then $$a_u(1+\langle
v^j,u\rangle)=a_u(1+\langle v,u\rangle)+ a_u\gamma_j\langle
e^*_j,u\rangle,$$ so that
$$\langle e_j^*,u\rangle<0 \Longleftrightarrow \langle v^j,u\rangle=-1 \Longleftrightarrow u\in F_j,$$
or, in other words:
$$u\not\in F_j \Longleftrightarrow \langle e_j^*,u\rangle\geq 0.$$

Now, if  $u\neq u^j$ for all $j\in\{1,\dots,n\}$, then  $\langle e_j^*,u\rangle\geq 0$ for all $j\in\{1,\dots,n\}$ and then $u$ is in the cone generated by
 $e_1,\dots,e_n$, that is a contradiction. So, $u$ is one of the $u^j$, {\it i.e.}, $u$ is adjacent to $F_v$.
\end{proof}

We need also to define two different families of rational curves in
$X$. The first family is very similar to the family of the curves
stable under the action of the torus in the case of toric varieties.
And the second one is a family of Schubert curves in  the closed
$G$-orbits of $X$.

Denote by $x_0$ the point of $X$ with isotropy group $H$. Remark
that $P/H\simeq(\Cbb^*)^n$. Then the closure $X'$ of the $P$-orbit
of $x_0$ in $X$ is a toric variety associated to the fan consisting
of the cones $(\mathcal{C}_{F_v})_{v\in Q^*}$ (defined in Remark
\ref{remcoloredfan}) and their faces, {\it i.e.} the fan of $X$
without colors.

Let $\mu$ be a $(n-2)$-dimensional face of $Q$ (or, that is the same
a $(n-1)$-dimensional colored cone of the colored fan of $X$, or
also a $(n-1)$-dimensional cone of the fan of $X'$). Then there
exists a unique $P$-stable curve of $X'\subset X$ containing two
$P$-fixed points lying in the two closed $G$-orbits of $X$
corresponding to the two facets of $Q$ containing $\mu$. This curve
is rational; we denoted it by $C_\mu$.

Let $v\in V(Q^*)$ and $\alpha\in(S\backslash
I)\backslash\mathcal{D}_X$ such that
$\check\alpha_M\in\mathcal{C}_{F_v}$. Recall that $v$ corresponds to
a closed $G$-orbit $Y$ of $X$. Moreover, since $\alpha$ is not a
color of $X$, $Y=G/P_Y$, where $P_Y$ is a parabolic subgroup of $G$
containing $B$ such that $\omega_\alpha$ is a character of $P_Y$.
Then we can define the rational $B$-stable curve $C_{\alpha,v}$ in
$X$ to be the Schubert curve $\overline{Bs_\alpha P_Y/P_Y}$ where
$s_\alpha$ is the simple reflection associated to the simple root
$\alpha$.

It is not difficult to check that the curve $C_\mu$ is the one
defined in \cite[Prop.3.3]{Br93}. Moreover, the curve $C_{\alpha,v}$
is the one defined in \cite[Prop.3.6]{Br93}. Indeed, $C_{\alpha,v}$
is contracted by the extremal contraction described in
\cite[Ch.3.4]{Br93}, that consists in ''adding'' the color $\alpha$.
In fact, the curves $C_\mu$ and $C_{\alpha,v}$ are exactly the
irreducible $B$-stable curves in $X$ and they generate the cone of
effective 1-cycles.

Now, using the definition of the polytope $Q$ given in
\cite[Ch.3]{Pa06} and the intersection formulas given in
\cite[Ch.3.2]{Br93}, one can prove the following lemma.

\begin{lem}\label{lemnoir}
\begin{enumerate}
\item  Let $\mu$ a $(n-2)$-dimensional face of $Q$, choose $v\in
V(Q^*)$ and $u\in V(Q)$ such that $F_v$ and some facet of $Q$
containing $u$ intersect along $\mu$. Let $\chi_\mu$ the primitive
element of $M$ such that $\langle\chi_\mu,x\rangle=0$ for all
$x\in\mu$ and $\langle\chi_\mu,u\rangle>0$. Then we have
$$-K_X.C_\mu =\frac{a_u(1+\langle v,
u\rangle)}{\langle\chi_\mu,a_uu\rangle}.$$

\item Let $v\in V(Q^*)$ and $\alpha\in (S\backslash
I)\backslash\mathcal{D}_X$ such that
$\check\alpha_M\in\mathcal{C}_{F_v}$. Then we have
$$-K_X.C_{\alpha,v} =a_\alpha+\langle v,
\check\alpha_M\rangle.$$
\end{enumerate}
\end{lem}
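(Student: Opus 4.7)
The proof combines two inputs, both flagged in the paragraph preceding the lemma. First, the construction of the polytope $Q$ in \cite[Ch.3]{Pa06} yields the expression
$$-K_X \;=\; \sum_{u\in V(Q)} a_u\, D_u \;+ \sum_{\alpha\in (S\setminus I)\setminus\mathcal{D}_X} a_\alpha\, D_\alpha,$$
where $D_u$ denotes the $B$-stable prime divisor attached to the vertex $u$ of $Q$ (a $G$-stable divisor when $u\in N$, a color divisor $D_\alpha$ when $u=\check\alpha_M/a_\alpha$ with $\alpha\in\mathcal{D}_X$), and the second sum ranges over the remaining colors, whose associated points $\check\alpha_M/a_\alpha$ lie in the open interior $\stackrel{o}{Q}$. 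Second, Brion's intersection formulas \cite[Prop.~3.3 and 3.6]{Br93} compute $D\cdot C_\mu$ and $D\cdot C_{\alpha,v}$ for every $B$-stable prime divisor $D$.

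For part (1), the curve $C_\mu$ sits inside the toric closure $X'$, and Brion's formula forces $D\cdot C_\mu=0$ for every $B$-stable divisor whose representative in $N_\Rbb$ lies off the two facets $F_v$ and $F_{v'}$ meeting along $\mu$. In particular, the colors in the second sum of $-K_X$ all drop out. With $\chi_\mu$ chosen as in the statement, the toric intersection numbers $D_{u'}\cdot C_\mu$ admit a uniform form in which $\langle\chi_\mu, a_u u\rangle$ appears in the denominator and $\langle\chi_\mu, u'\rangle$ in the numerator. Summing these contributions weighted by $a_{u'}$ over the $n-1$ vertices $u'$ of $\mu$ and using the equality $\langle v,u'\rangle=-1$, which holds on $F_v$, collapses the $F_v$-part; the single remaining term, coming from the off-vertex $u\notin\mu$, produces $\frac{a_u(1+\langle v,u\rangle)}{\langle\chi_\mu,a_u u\rangle}$.

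For part (2), $C_{\alpha,v}$ is the Schubert curve in the closed $G$-orbit $Y=G/P_{I\cup J_v}$ contracted by the extremal ray ``adding the color $\alpha$''. Brion's formula \cite[Prop.~3.6]{Br93} expresses $D\cdot C_{\alpha,v}$ for each $B$-stable prime divisor $D$ as a pairing involving $\check\alpha$. The only divisors contributing nontrivially are those associated to vertices of $F_v$ together with $D_\alpha$ itself. Decomposing $\check\alpha_M\in\mathcal{C}_{F_v}$ as a nonnegative combination of the vertices of $F_v$ and invoking $\langle v,\cdot\rangle=-1$ on $F_v$, the contributions of the $G$-stable divisors assemble into $\langle v,\check\alpha_M\rangle$, while $D_\alpha$ supplies the constant $a_\alpha$, producing the stated $a_\alpha+\langle v,\check\alpha_M\rangle$.

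The main obstacle is the combinatorial bookkeeping rather than any conceptual difficulty: one must identify which summand of $-K_X$ corresponds to which $B$-stable divisor, apply Brion's intersection formulas with the correct normalizations coming from the $a_u$'s, and perform the cancellations that collapse many nonzero contributions into the compact closed forms of the lemma. In both parts, the key simplifying fact is the facet-defining equality $\langle v,\cdot\rangle=-1$ on $F_v$.
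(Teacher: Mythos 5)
The paper offers no proof of this lemma beyond the one\-/sentence pointer to \cite{Pa06} (for the expression of $-K_X$) and \cite{Br93} (for the intersection numbers), and your reconstruction follows exactly that intended route. Your decomposition $-K_X=\sum_{u\in V(Q)}a_uD_u+\sum_{\alpha\in(S\backslash I)\backslash\mathcal{D}_X}a_\alpha D_\alpha$ is the correct one, and the two vanishing claims you rely on are right: a color $\alpha\notin\mathcal{D}_X$ contains no $G$-orbit met by $C_\mu$, so $D_\alpha\cdot C_\mu=0$; and in part (2) a $B$-stable divisor not containing $Y$ and distinct from $D_\alpha$ cuts $Y=G/P_Y$ in Schubert divisors $S_\beta$ with $S_\beta\cdot C_{\alpha,v}=\delta_{\alpha\beta}=0$. (In part (2), note only that some "vertices of $F_v$" may themselves be colors, so it is the divisors attached to the vertices of $F_v$ --- not literally the $G$-stable ones --- whose contributions assemble into $\langle v,\check\alpha_M\rangle$.)

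The one place where your bookkeeping in part (1) goes wrong is the claim that after the contributions of the $n-1$ vertices of $\mu$ collapse, "the single remaining term" comes from $u$. Write $e_1,\dots,e_{n-1}$ for the vertices of $\mu$, $e_n$ for the remaining vertex of $F_v$, and $F_{v'}$ for the facet spanned by $\mu$ and $u$. The divisors meeting $C_\mu$ are those attached to all $n+1$ vertices $e_1,\dots,e_n,u$ of $F_v\cup F_{v'}$, and $D_{e_n}\cdot C_\mu=1/|\langle\chi_\mu,a_{e_n}e_n\rangle|$ is strictly positive, so it cannot be dropped. The correct collapse uses $\langle v,\cdot\rangle=-1$ on all $n$ vertices of $F_v$, including $e_n$: since $Q^*$ is a lattice polytope, $v\in M$, and pairing the principal-divisor relation $\sum_D\langle v,\rho_D\rangle\,(D\cdot C_\mu)=0$ (where $\rho_D\in N$ is the point attached to the $B$-stable divisor $D$, i.e.\ $a_uu$ for a vertex $u$) with the decomposition of $-K_X$ gives $\sum_{i=1}^{n}a_{e_i}(D_{e_i}\cdot C_\mu)=a_u\langle v,u\rangle\,(D_u\cdot C_\mu)$, hence $-K_X\cdot C_\mu=a_u(1+\langle v,u\rangle)(D_u\cdot C_\mu)$; one concludes with $D_u\cdot C_\mu=1/\langle\chi_\mu,a_uu\rangle$ from the toric intersection formula on $X'$. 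This is a local repair rather than a different argument: with it, your proof is precisely the one the paper has in mind.
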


We are now able to prove easily Proposition \ref{lem1}.

\begin{proof}[Proof of Proposition \ref{lem1}]
(i) Let $u\in V(Q)$ and $v\in V(Q^*)$ such that
$\epsilon_Q=a_u(1+\langle v,u \rangle)$, then by Lemma \ref{lem2},
$u$ is adjacent to $F_v$. This means that $u$ lies in a facet $F$ of
$Q$ such that $F$ and $F_v$ intersect along a $(n-2)$-dimensional
face $\mu$. Then, by Lemma \ref{lemnoir}, we have
$$-K_X.C_\mu =\frac{a_u(1+\langle v, u\rangle)}{\langle\chi_\mu,a_uu\rangle},$$
where $\chi_\mu\in M$ and $\langle\chi_\mu,a_uu\rangle$ is a
positive integer. Then we have $$\iota_X\leq -K_X.C_\mu \leq
a_u(1+\langle v, u\rangle)=\epsilon_Q.$$

(ii) Let $\alpha\in (S\backslash I)\backslash\mathcal{D}_X$. Let us
choose $v\in V(Q^*)$ such that $\check\alpha_M\in\mathcal{C}_{F_v}$.
Then, by Lemma~\ref{lemnoir},
$$\iota_X\leq -K_X.C_{\alpha,v}=a_\alpha+\langle v,
\check\alpha_M\rangle \leq a_\alpha$$ because $\langle v,
\check\alpha_M\rangle$ is non-positive by the choice of $v$.
\end{proof}

\begin{rem}\label{remnoncolor}
In (ii), if $\check\alpha_M\neq 0$, we have in fact the strict
inequality $\iota_X<a_\alpha$.
\end{rem}

\begin{proof}[Proof of the inequality of Theorem \ref{Theo}]
Since the origin lies in the interior of $Q^*$, there exists a
relation \begin{equation}\label{eq1} m_1v_1+\cdots
+m_hv_h=0\end{equation} where $h>0$, $v_1,\dots,v_h$ are vertices of
$Q^*$, and $m_1,\dots,m_h$ are positive integers. Let
$I:=\{1,\dots,h\}$ and $M:=\sum_{i\in I}m_i$. For any $u\in V(Q)$,
we define
$$A(u)=\{i\in I\mid\langle v_i,u\rangle=-1\}.$$

 Now, we have
 \begin{eqnarray}
0 & = & \sum_{i\in I}m_i\langle v_i,u\rangle \nonumber\\
  & = & -\sum_{i\in A(u)}m_i+\sum_{i\not\in A(u)}m_i\langle
  v_i,u\rangle \nonumber\\
  & \geq & -\sum_{i\in A(u)}m_i+\sum_{i\not\in A(u)}m_i(\frac{\epsilon_Q}{a_u}-1)=(\frac{\epsilon_Q}{a_u}-1)M-\frac{\epsilon_Q}{a_u}\sum_{i\in
  A(u)}m_i\label{eq2}
\end{eqnarray}
so that $$\frac{\epsilon_Q-a_u}{\epsilon_Q}M\leq \sum_{i\in
A(u)}m_i.$$

Denote by $r$ the positive integer such that the number of vertices
of $Q$ is $n+r$. Let us sum the preceding inequalities over all
vertices $u$ of $Q$. We obtain
$$(n+r)M-\frac{\sum_{u\in V(Q)}a_u}{\epsilon_Q}M\leq\sum_{u\in
V(Q)}\sum_{i\in A(u)}m_i=nM\mbox{ and }\epsilon_Qr\leq\sum_{u\in
V(Q)}a_u.$$

(The last equality comes from the fact that $X$ is $\Qbb$-factorial,
so that each facet of $Q$ has exactly $n$ vertices.)

Then, by Proposition \ref{lem1} (i), we have
\begin{eqnarray}\iota_Xr & \leq & \epsilon_Q r\label{eq3}\\ & \leq & \sum_{u\in V(Q)}a_u.\nonumber\end{eqnarray} But recall
that $\rho_X=r+\sharp(S\backslash I)-\sharp(\mathcal{D}_X)$ and then
$$\sum_{u\in V(Q)}a_u=\sum_{\alpha\in\mathcal{D}_X}a_\alpha + \rho_X
+ n- \sharp(S\backslash I).$$ Thus, we have
$$\iota_X(\rho_X-\sharp(S\backslash I)+\sharp(\mathcal{D}_X))\leq
\sum_{\alpha\in\mathcal{D}_X}a_\alpha + \rho_X + n-
\sharp(S\backslash I)$$ and, by Proposition \ref{lem1}, we obtain
that
\begin{eqnarray}
\rho_X(\iota_X-1) & \leq &
\sum_{\alpha\in\mathcal{D}_X}a_\alpha+\sum_{\alpha\in (S\backslash
I)\backslash\mathcal{D}_X}\iota_X -\sharp(S\backslash I) +n\nonumber\\
 & \leq & \sum_{\alpha\in\mathcal{D}_X}a_\alpha+\sum_{\alpha\in (S\backslash
I)\backslash\mathcal{D}_X}a_\alpha -\sharp(S\backslash I) +n =
\sum_{\alpha\in S\backslash I}(a_\alpha-1)+n.\label{eq4}
\end{eqnarray}
This yields the inequality of Theorem \ref{Theo} by \cite[Lemmma
4.13]{Pa06} that asserts that
\begin{equation}\label{eq5}\sum_{\alpha\in S\backslash
I}(a_\alpha-1)+n\leq d. \end{equation}
\end{proof}

Let us study now the cases of equality. In view of the above proof,
one can remark that $(\iota_X-1)\rho_X=d$ if and only if equality
holds in inequalities \ref{eq2}, \ref{eq3}, \ref{eq4} and \ref{eq5},
{\it i.e.} all the following conditions are satisfied:
\begin{enumerate}
\item $\forall u\in V(Q)$, $\forall i\in\{1,\dots,h\}$,
$a_u(1+\langle v_i,u\rangle)$ equals either 0 or $\epsilon_Q$;
\item $\iota_X=\epsilon_Q$;
\item $\forall\alpha\in(S\backslash
I)\backslash\mathcal{D}_X$, $\iota_X=a_\alpha$;
\item $\sum_{\alpha\in S\backslash
I}(a_\alpha-1)+n=d$.
\end{enumerate}

Let us make a few remarks about these conditions.

One can take a relation as \ref{eq1} involving all vertices of
$Q^*$. Thus, Condition 1 can be replaced by
\begin{description}
\item 1'. $\forall u\in V(Q)$, $\forall v\in V(Q^*)$, $a_u(1+\langle
v,u\rangle)$ equals either 0 or $\epsilon_Q$.
\end{description}

\begin{lem}\label{lemmacondition2}
Conditions 2 implies that $X$ is locally factorial.
\end{lem}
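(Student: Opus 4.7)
The plan is to translate local factoriality into a lattice-basis condition on the vertices of each facet of $Q$ and then extract it from $\iota_X=\epsilon_Q$ via the intersection formula of Lemma~\ref{lemnoir}(1). Concretely, for each facet $F_v$ of $Q$ with vertices $e_1,\ldots,e_n$, local factoriality at the colored cone $\mathcal{C}_{F_v}$ amounts to asking that $a_{e_1}e_1,\ldots,a_{e_n}e_n$ form a $\mathbb{Z}$-basis of $N$; by lattice duality this is equivalent to $\langle\chi_{\mu_j},a_{e_j}e_j\rangle=\pm 1$ for each $(n-2)$-face $\mu_j$ of $F_v$ opposite $e_j$, where $\chi_{\mu_j}\in M$ is the primitive element from Lemma~\ref{lemnoir}(1).

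For any such $(n-2)$-face $\mu$ shared by $F_v$ and a facet $F'$, and for the unique vertex $u$ of $F'$ not in $\mu$, the condition $u\notin F_v$ gives $a_u(1+\langle v,u\rangle)\geq\epsilon_Q$ by the definition of $\epsilon_Q$. Lemma~\ref{lemnoir}(1) then reads
$$\iota_X\leq-K_X\cdot C_\mu=\frac{a_u(1+\langle v,u\rangle)}{\langle\chi_\mu,\,a_uu\rangle},$$
an equality of positive integers whose numerator and denominator are both integers, and $\iota_X=\epsilon_Q$ by the hypothesis. Picking a pair $(u_0,v_0)$ realising $\epsilon_Q$, which is furnished by the very definition and has $u_0$ adjacent to $F_{v_0}$ by Lemma~\ref{lem2} (say $u_0=u^{j_0}$ with opposite wall $\mu_{j_0}$), the numerator in the formula at $\mu_{j_0}$ collapses to $\epsilon_Q$, forcing $\langle\chi_{\mu_{j_0}},a_{u_0}u_0\rangle=1$; the symmetric application from the other side of $\mu_{j_0}$ yields $\langle\chi_{\mu_{j_0}},a_{e_{j_0}}e_{j_0}\rangle=-1$, establishing the lattice-basis condition at that wall.

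The main obstacle is propagating this smoothness from the single wall where $\epsilon_Q$ is realised to every $(n-2)$-face of $Q$, which is what full local factoriality requires. My plan is to iterate along the dual graph of facets of $Q$: since adjacent facets share $n-1$ vertices, once the lattice-basis property holds at a shared wall, the integrality of $-K_X\cdot C_\mu$ for the remaining walls, together with the Cartier condition for $-K_X$ and the relation $v^j=v+\gamma_je_j^*$ recorded in the proof of Lemma~\ref{lem2}, lets one transfer smoothness from one cone to its neighbours. Once every wall multiplicity equals $1$, the lattice-basis condition holds on every facet and $X$ is locally factorial.
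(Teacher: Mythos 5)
Your reduction of local factoriality to the condition $\langle\chi_{\mu},a_{e}e\rangle=\pm1$ at every wall of every facet is the right criterion (it is the content of \cite[Proposition 4.4]{Pa06}, which the paper also invokes), and your argument at the single wall $\mu_{j_0}$ realising $\epsilon_Q$ is correct: there $\epsilon_Q=\iota_X\le -K_X\cdot C_{\mu_{j_0}}=\epsilon_Q/\langle\chi_{\mu_{j_0}},a_{u_0}u_0\rangle$ forces $\langle\chi_{\mu_{j_0}},a_{u_0}u_0\rangle=1$. But already the ``symmetric application from the other side'' is not justified: computing the same intersection number from the facet $F_{j_0}$ gives numerator $a_{e_{j_0}}(1+\langle v_{j_0},e_{j_0}\rangle)$, which Condition 2 alone only bounds below by $\epsilon_Q$; since $-K_X\cdot C_{\mu_{j_0}}=\epsilon_Q$, you obtain $|\langle\chi_{\mu_{j_0}},a_{e_{j_0}}e_{j_0}\rangle|=a_{e_{j_0}}(1+\langle v_{j_0},e_{j_0}\rangle)/\epsilon_Q$, which need not equal $1$.

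The genuine gap is the propagation step, which you correctly flag as the main obstacle and then do not carry out. The sketch (iterate along the dual graph of facets using integrality of $-K_X\cdot C_\mu$) fails because, for a general wall $\mu$ with data $(u,v)$, one only gets $\langle\chi_\mu,a_uu\rangle=a_u(1+\langle v,u\rangle)/(-K_X\cdot C_\mu)$ with both numerator and denominator merely bounded below by $\epsilon_Q$, so this positive integer is not forced to be $1$; knowing that the neighbouring cone is unimodular does not improve this. The paper proceeds by contraposition instead: if some facet $\{u_1,\dots,u_n\}$ is not unimodular, \cite[Proposition 4.4]{Pa06} supplies a lattice point $u_0=\sum_i\lambda_ia_{u_i}u_i$ with some $0<\lambda_j<1$, whence $\langle\chi_\mu,a_{u_j}u_j\rangle=\chi_\mu(u_0)/\lambda_j>1$ at the wall opposite $u_j$, and then $-K_X\cdot C_\mu$ is \emph{strictly} smaller than $a_{u_j}(1+\langle v,u_j\rangle)$. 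Note that closing even that argument requires the numerator at the offending wall to be controlled --- in the paper the lemma is applied in the equality case, where Condition 1' forces every nonzero $a_u(1+\langle v,u\rangle)$ to equal $\epsilon_Q$ --- and that is precisely the extra input your iteration lacks. As written, your argument establishes unimodularity on one side of one wall, not local factoriality of $X$.
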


\begin{proof}
Let us suppose that $X$ is not locally factorial (but
$\Qbb$-factorial). Then there exists a facet of $Q$ with vertices
$u_1,\dots,u_n$ such that $(a_{u_1}u_1,\dots,a_{u_n}u_n)$ is a basis
of $N_\Rbb$ but not of $N$ (see \cite[Proposition 4.4]{Pa06}). This
means that there exist a non-zero element
$u_0=\sum_{i=1}^n\lambda_ia_{u_i}u_i$ of $N$ such that $\forall
i\in\{1,\dots,n\}$, $\lambda_i\in \Qbb$, $0\leq\lambda_i\leq 1$ and
some $i$ satisfying $0<\lambda_i<1$. Let $\mu$ be the wall generated
by $u_1,\dots,u_{j-1},u_{j+1},\dots,u_n$ and let $\chi_\mu$ be the
associated element of $M$ as in the proof of Proposition \ref{lem1}
(i) (with $\chi_\mu(u_j)>0$). Then
$\chi_\mu(a_{u_j}u_j)=\frac{1}{\lambda_j}\chi_\mu(u_0)>1$ because
$\chi_\mu(u_0)\in\Zbb_{>0}$. This implies, by the proof of
Proposition \ref{lem1} (i) that $\iota_X<\epsilon_Q$.
\end{proof}

By Remark \ref{remnoncolor}, Condition 3 implies that every simple
root $\alpha$ such that $\check\alpha_M\neq 0$ is a color of $X$.

Using the proof of \cite[Lemmma 4.13]{Pa06}, one can prove that
Condition 4 is equivalent to
\begin{description}
\item
4'. The decomposition $\Gamma=\sqcup_{i=1}^\gamma\Gamma_i$ into
connected components of the Dynkin diagram~$\Gamma$ of $G$
satisfies: $\forall i\in\{1,\dots,\gamma\}$, either no vertex of
$\Gamma_i$ corresponds to a simple root of $S\backslash I$ or only
one vertex of $\Gamma_i$ corresponds to a simple root $\alpha_i$ of
$S\backslash I$. In the latter case, $\Gamma_i$ is of type $A_m$ or
$C_m$, and $\alpha_i$ corresponds to a simple end of $\Gamma_i$
("simple" means not adjacent to a double edge).\end{description}

Remark that Conditions 2 and 4' imply, together with Lemma
\ref{lemmacondition2}, that $X$ is smooth (by the smoothness
criterion of horospherical varieties \cite[Theorem 2.6]{Pa06}).

We may now complete the proof of Theorem \ref{Theo}.

\begin{proof}[Proof of the case of equality of Theorem \ref{Theo}]
Step 1: one can suppose that $G$ is the direct product of a
semi-simple group $G'$ and a torus (for example, one can replace $G$
by the product of the semi-simple part of $G$ by the torus $P/H$,
see \cite[proof of Proposition 3.10]{Pa06}). Then one can also
suppose that $G'$ is a direct product of simple groups. Then,
because of Condition~4', we have $$X=X'\times\prod_{\alpha\in
S\backslash I, \check\alpha_M=0}G/P(\omega_\alpha),$$ where $X'$ is
a horospherical variety. Moreover, for all $\alpha\in S\backslash I$
such that $\check\alpha_M=0$, the variety $G/P(\omega_\alpha)$ is
isomorphic to $\Pbb^{a_\alpha-1}$ by condition 4', and hence to
$\Pbb^{\iota_X-1}$ by Condition 3. Also, $X'$ has the same
combinatorial data as $X$, except for the colors with zero image.

Then we have to prove the result in the case where there is no
$\alpha\in S\backslash I$ such that $\check\alpha_M=0$. In that
case, we have $\mathcal{D}_X=S\backslash I$.\\

Step 2: Let $v$ be a vertex of $Q^*$ and let $e_1,\dots,e_n$ the
vertices of $Q$ satisfying $\langle v,e_i\rangle=-1$. Let us denote
by $f_1,\dots,f_r$ the remaining vertices of $Q$. Let
$K:=\{1,\dots,n\}$ and $J:=\{1,\dots,r\}$. For any $k\in K$, the
face of $Q$ with vertices $e_1,\dots,e_{k-1},e_{k+1},\dots,e_n$ lies
on exactly two facets, one of which has vertices $e_1,\dots,e_n$.
Thus, there exists a unique $\phi(k)\in J$ such that
$f_{\phi(k)},e_1,\dots,e_{k-1},e_{k+1},\dots,e_n$ are the vertices
of a facet $F_k$ of $Q$. This defines a function $\phi:K\lra J$.

Let $(e_1^*,\dots,e_n^*)$ the dual basis of $(e_1,\dots,e_n)$ in
$M_\Rbb$. We have $v=-e_1^*-\cdots -e_n^*$. Fix $k\in K$ and let
$v_k$ be the vertex of $Q^*$ corresponding to the facet $F_k$ of
$Q$. We have, by Condition 1', $$\forall i\in
K\backslash\{k\},\,a_{e_i}(1+\langle v_k, e_i\rangle)=0\mbox{ and
}a_{e_k}(1+\langle v_k, e_k\rangle)=\epsilon_Q,$$ so that
$v_k=v+\frac{\epsilon_Q}{a_{e_k}}e_k^*.$

Now for any $j\in J$ we have
$$\langle e_k^*,f_j\rangle=\frac{a_{e_k}}{\epsilon_Q}\langle
v_k-v,f_j\rangle=\left\{
\begin{array}{cc}
-\frac{a_{e_k}}{a_{f_j}} & \mbox{ if }\phi(k)=j,\\
0 & \mbox{ otherwise.}
\end{array} \right. $$
And then $$a_{f_j}f_j+\sum_{k\in\phi^{-1}(j)}a_{e_k}e_k=0.$$ One can
easily check that there exists a decomposition $M=\bigoplus_{j\in J}
M_j$ such that for all $j\in J$,
$$M_j=\{m\in M\mid\langle m,e_k\rangle=0,\,\forall k\in
K\backslash\phi^{-1}(j)\}.$$ We can also assume that the set
$S\backslash I$ is in bijection with the set of normal simple
subgroups of $G$, because of Step 1 and Condition 4'. So that one
can decompose $G$ as a product $G_1\times\dots\times G_r$ such that
for all $j\in J$, $M_j=M\cap \Lambda_{G_j}$ where $\Lambda_{G_j}$ is
the character group of a maximal torus of $G_j$. Then $G/H$ is
isomorphic to $G_1/H_1\times\dots\times G_r/H_r$ for some
horospherical subgroups $H_j$ of $G_j$. Moreover it is not difficult
to check (comparing the associated $G/H$-reflexive polytopes) that
$X$ is isomorphic to the product $X_1\times\dots\times X_r$ where
$X_j$ is the $G_j/H_j$-embedding associated to the reflexive
polytope defined as the convex hull of $f_j$ and all $e_k$ such that
$\phi(k)=j$. For all $i\in\{1,\dots,r\}$, we can compute that the
Picard number of $X_i$ is~1. Then, by Condition 4' and \cite[Theorem
1.4]{Pa08}, $X_i$ is a projective space. This completes the proof of
Theorem \ref{Theo}.
\end{proof}


\begin{thebibliography}{111111}

\bibitem{BCDD} L.~Bonavero, C.~Casagrande, O.~Debarre et S.~Druel, {\sl Sur une conjecture de Muka\"i}, Commentarii Mathematici Helvetici {\bf 78} (2003),601-626.

\bibitem{Br89} M.~Brion, {\sl Groupe de Picard et nombres caract\'eristiques des vari\'et\'es sph\'eriques}, Duke Math. J. {\bf 58} (1989), no. 2, 397-424.

\bibitem{Br93} M.~Brion, {\sl Vari\'et\'es sph\'eriques et th\'eorie
de Mori}, Duke Math. J. {\bf 72}  (1993),  no. 2, 369-404.

\bibitem{Br97b} M.~Brion, {\sl Vari\'et\'es sph\'eriques}, Notes available at http://www-fourier.ujf-grenoble.fr/~mbrion/spheriques.pdf, 1997.

\bibitem{Ca06} C.~Casagrande, {\sl The number of vertices of a Fano polytope}, Ann. Inst. Fourier (Grenoble) {\bf 56} (2006), no. 1, 121-130.


\bibitem{Kn91} F.~Knop, {\sl The Luna-Vust Theory of Spherical
Embeddings}, Proceedings of the Hyderabad Conference on Algebraic
Groups, Manoj-Prakashan, 1991, 225-249.

\bibitem{LV83} D.~Luna and T.~Vust, {\sl Plongements d'espaces homog\`enes}, Comment. Math. Helv. {\bf 58} (1983), 186-245.

\bibitem{Pa06} B.~Pasquier, {\sl Vari\'et\'es horosph\'eriques de Fano}, thesis available at
http://tel.archives-ouvertes.fr/tel-00111912.

\bibitem{Pa08} B.~Pasquier, {\sl On some smooth projective two-orbits varieties with Picard
number~1}, preprint, arXiv: math.AG/0801.3534.
\end{thebibliography}
\end{document}